\documentclass[10pt,reqno]{amsart}
\usepackage{amsmath}
\usepackage{amsthm}
\usepackage{amssymb}
\usepackage{amsfonts}
\usepackage{latexsym}
\usepackage{hyperref}
\usepackage{cite}
\usepackage{xcolor}
\usepackage{wasysym}

\usepackage{mathtools} 
\mathtoolsset{showonlyrefs} 

\theoremstyle{plain}
\newtheorem{theorem}{Theorem}

\newtheorem{lemma}[theorem]{Lemma}

\theoremstyle{definition}

\newtheorem{remark}[theorem]{Remark}


\newcommand{\Q}{\mathbb{Q}}

\newcommand{\C}{\mathbb{C}}
\newcommand{\N}{\mathbb{N}}
\newcommand{\F}{\mathbb{F}}
\newcommand{\K}{\mathbb{K}}

\newcommand{\Z}{\mathbb{Z}}
\newcommand{\PP}{\mathfrak{p}}
\renewcommand{\a}{\mathfrak{a}}
\renewcommand{\O}{\mathcal{O}}

\renewcommand{\AA}{\mathbf{A}}
\newcommand{\BB}{\mathbf{B}}
\newcommand{\CC}{\mathbf{C}}
\renewcommand{\P}{\mathbf{P}}

\newcommand{\D}{\mathbf{D}}

\newcommand{\M}{\mathbf{M}}

\renewcommand{\Re}{\operatorname{Re}}

\renewcommand{\pmod}[1]{\,\,(\operatorname{mod}#1)}

\let\oldenumerate=\enumerate
	\def\enumerate{
	\oldenumerate
	\setlength{\itemsep}{5pt}
	}
\let\olditemize=\itemize
	\def\itemize{
	\olditemize
	\setlength{\itemsep}{5pt}
	}

\linespread{1.05}

\allowdisplaybreaks


\begin{document}

\title[An effective analytic formula ]{An effective analytic formula for the number of distinct irreducible factors of a polynomial}

	\author[S.~R.~Garcia]{Stephan Ramon Garcia}
	\address{Department of Mathematics, Pomona College, 610 N. College Ave., Claremont, CA 91711} 
	\email{stephan.garcia@pomona.edu}
	\urladdr{\url{http://pages.pomona.edu/~sg064747}}
	
	\author[E.~S.~Lee]{Ethan Simpson Lee}
    \address{School of Science, UNSW Canberra at the Australian Defence Force Academy, Northcott Drive, Campbell, ACT 2612} 
    \email{ethan.s.lee@student.adfa.edu.au}
    \urladdr{\url{https://www.unsw.adfa.edu.au/our-people/mr-ethan-lee}}

	\author[J.~Suh]{Josh Suh}
	\author[J.~Yu]{Jiahui Yu}
	
\thanks{SRG supported by NSF Grant DMS-1800123.}

\subjclass[2010]{11C08, 12E05, 11A41}

\keywords{Polynomial, irreducible, number field, Mertens' theorem, prime, prime ideal, discriminant}

\begin{abstract}
We obtain an effective analytic formula, with explicit constants, for the number of distinct irreducible factors of a polynomial $f \in \Z[x]$.  We use an explicit version of Mertens' theorem for number fields to estimate a  related sum over rational primes.  For a given $f \in \Z[x]$, our result yields a finite list of primes that certifies the number of distinct irreducible factors of $f$.
 \end{abstract}

\maketitle

\section{Introduction}
In this note we establish an effective analytic formula (Theorem \ref{Theorem:Main}) 
for the number of distinct irreducible factors of a polynomial $f \in \Z[x]$.  
Our error bounds are unconditional and explicit in terms of 
their dependence upon $f$.   
To this end, we first introduce Theorem \ref{Theorem:NFM}, which is of independent interest 
since it relates a Mertens-type sum over number fields to a weighted sum over rational primes
in an explicit manner that does not involve the residue of a Dedekind zeta function.

\subsection{A Mertens-type sum}

Let $\K$ be a number field of degree $d$ with ring of integers of $\O_\K$.
Let $N(\PP)$ denote the norm of a prime ideal $\PP\subset\O_{\K}$ and $p$ a rational prime.
The primitive element theorem says that $\K = \Q(\alpha)$ for some $\alpha \in \O_{\K}$ \cite[Cor.~2.12]{StewartTall}.
Let $g \in \Z[x]$ be irreducible with root $\alpha$ and leading coefficient $c$.
The degree of $g$ is $d$ \cite[p.~47]{StewartTall} and the discriminant $D_g$ of $g$ is nonzero 
(Lemma \ref{Lemma:IrreducibleDiscriminant}).

\begin{theorem}\label{Theorem:NFM}
Let $\K = \Q(\alpha)$, in which $g \in \Z[x]$ is irreducible with root $\alpha$, leading coefficient $c$, and degree $d$.  Define $\D_g = |c|^{ (d-1)(d-2) } |D_g|$. 
For $x > \max\{2,\sqrt{\D_g}\}$,
\begin{equation*}
    \sum_{p\leq x} \frac{\omega_g(p)}{p} = \underbrace{\sum_{N(\PP)\leq x} \frac{1}{N(\PP)} }_{\M_{\K}(x)}+ A_g,
\end{equation*}
in which $\omega_g(p)$ is the number of solutions to  $g(x) \equiv 0 \pmod{p}$,
\begin{equation}\label{eq:Ah}
    |A_g| \leq d\left( \M_{\Q}(|c|) + \M_{\Q}\Big(\sqrt{\D_g}\Big) + 0.64\right),\quad\text{and}\quad \M_{\Q}(x) = \sum_{p\leq x}\frac{1}{p}.
\end{equation}
\end{theorem}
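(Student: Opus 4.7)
My plan is to reduce to a monic companion polynomial so the Dedekind--Kummer theorem applies, identify $\omega_g(p)$ with the count of degree-one prime ideals of $\O_\K$ above $p$ for all but finitely many ``bad'' rational primes, and then bound the error from bad primes together with the contribution of higher-residue-degree primes.

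First I would set $\tilde{g}(x) := c^{d-1}g(x/c) \in \Z[x]$, a monic polynomial of degree $d$ whose root $\beta := c\alpha$ lies in $\O_\K$ and satisfies $\K = \Q(\beta)$. A direct calculation from $g(x) = c\prod_{i}(x-\alpha_i)$ gives $\tilde g(x) = \prod_{i}(x - c\alpha_i)$, hence $\operatorname{disc}(\tilde g) = c^{(d-1)(d-2)}D_g$ and $|\operatorname{disc}(\tilde g)| = \D_g$. Because $[\O_\K : \Z[\beta]]^2$ divides $|\operatorname{disc}(\tilde g)| = \D_g$, every prime $p\nmid\D_g$ is coprime to that index, and the Dedekind--Kummer theorem then yields an exact correspondence between the factorization of $p\O_\K$ and the factorization of $\tilde g$ modulo $p$. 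When additionally $p\nmid c$, the substitution $a \mapsto c^{-1}a$ bijects the roots of $\tilde g$ in $\F_p$ with the roots of $g$ in $\F_p$. Thus at every \emph{good} prime $p$ (meaning $p \nmid c\D_g$), the number of degree-one primes of $\O_\K$ above $p$ is exactly $\omega_g(p)$.

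Regrouping the Mertens sum by rational primes, write $\M_\K(x) = \sum_{p \leq x} S_p(x)$ with $S_p(x) := \sum_{\PP \mid p,\, N(\PP) \leq x} 1/N(\PP)$. For good $p$, peel off the degree-one part as $S_p(x) = \omega_g(p)/p + T_p(x)$, where $T_p(x)$ collects the primes above $p$ of residue degree at least two. Subtracting from $\sum_{p \leq x}\omega_g(p)/p$ produces the explicit identity
\[
A_g = \sum_{\substack{p \leq x \\ p \mid c\D_g}} \Bigl( \frac{\omega_g(p)}{p} - S_p(x) \Bigr) \;-\; \sum_{\substack{p \leq x \\ p \nmid c\D_g}} T_p(x).
\]
The identity $\sum_{\PP \mid p} e(\PP) f(\PP) = d$ forces at most $d/2$ primes above $p$ to have residue degree $\geq 2$, so $T_p(x) \leq (d/2)p^{-2}$, and the good-prime tail is controlled by $(d/2)\sum_{p} p^{-2}$. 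For each bad prime I would use the crude bounds $\omega_g(p) \leq d$ and $S_p(x) \leq d/(p-1)$, reducing the analysis to an estimate for $\sum_{p \mid c\D_g} 1/p$.

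For that last sum I would split the bad primes into those dividing $c$ (each at most $|c|$, contributing at most $\M_\Q(|c|)$ in total) and those dividing $\D_g$. Among divisors of $\D_g$, two primes exceeding $\sqrt{\D_g}$ would multiply to more than $\D_g$, so at most one prime divisor of $\D_g$ can exceed $\sqrt{\D_g}$; the other divisors contribute at most $\M_\Q(\sqrt{\D_g})$, and the single possibly-large prime contributes at most $1/\sqrt{\D_g}$. The hypothesis $x > \max\{2, \sqrt{\D_g}\}$ ensures that all the small bad primes lie inside the summation range. The main obstacle, rather than any single conceptual step, is the final numerical bookkeeping: the residual $1/\sqrt{\D_g}$ correction, the tail $\sum_p p^{-2}$, and the discrepancy between $1/p$ and $1/(p-1)$ on bad primes must be combined with explicit rather than loose estimates so that everything fits inside the advertised constant $0.64\,d$.
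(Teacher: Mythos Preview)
Your plan is correct and runs parallel to the paper's proof: the same monic companion $\tilde g$ (the paper writes $h(cx)=c^{d-1}g(x)$), the same appeal to the Dedekind--Kummer criterion, and the same split into a set of ``bad'' primes plus a tail coming from residue degrees $\ge 2$. The one organizational difference is that the paper declares every $p\le\sqrt{\D_g}$ bad (a \emph{threshold} criterion) and handles the passage from $g$ to the monic $h$ as a separate step costing $d\,\M_\Q(|c|)$, whereas you declare exactly the divisors of $c\D_g$ bad (a \emph{divisibility} criterion). The paper's choice makes $\M_\Q(\sqrt{\D_g})$ appear automatically with no residual term; its $0.64=0.32+0.32$ arises from two uses of $\P(x)=\sum_{2\le k\le x}P(k)/k\le 0.32$, once for the prime-ideal side of the small-prime block and once for the higher-degree tail. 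Your tail estimate $T_p(x)\le (d/2)p^{-2}$, which sums to $(d/2)P(2)\approx 0.226\,d$, is actually sharper than the paper's $0.32\,d$ for that piece.

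Two places where your bookkeeping needs care. First, for bad primes use $S_p(x)\le d/p$ (there are at most $d$ ideals above $p$, each of norm $\ge p$) rather than $d/(p-1)$; the looser bound costs an extra $\tfrac{1}{p(p-1)}$ per bad prime, already $0.5$ at $p=2$, which you cannot afford. Second, your large-prime correction $1/q$ fits inside the budget only when $q\ge 3$, and this does not fall out of your argument as written: $q=2$ would force $\D_g<4$ with $2\mid\D_g$, i.e.\ $\D_g=2$. That case is in fact impossible, since $\D_g=|\operatorname{disc}(\tilde g)|=[\O_\K:\Z[\beta]]^2\,|\Delta_\K|$ and Stickelberger's congruence $\Delta_\K\equiv 0,1\pmod 4$ rules out $|\Delta_\K|=2$ while the index-squared factor cannot equal $2$; but you should say so. With these two points pinned down your constant comes to at most $\tfrac13+0.226<0.56$, comfortably inside the advertised $0.64$.
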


Rosser--Schoenfeld bounded $\M_{\Q}(x)$ explicitly \cite[(3.20)]{Rosser};
see \eqref{eq:ClassicalMertens} below.  It is known that $\M_{\K}(x) = \log \log x + O(1)$, where the
$O(1)$ term depends upon the residue of the corresponding Dedekind zeta function at $s=1$
\cite{Rosen, EMT4NF2}. 
Theorem \ref{Theorem:NFM} avoids this inconvenience and reduces the computation 
of a Mertens-type sum over number fields to a sum over rational primes, with an explicit error bound.

\subsection{An effective analytic formula}
Recall that $f \in \Z[x]$ is \emph{primitive} if the greatest common divisor
of its coefficients is $1$. A nonconstant polynomial in $\Z[x]$ is \textit{irreducible} in $\Z[x]$ if and only if
it is primitive and irreducible in $\Q[x]$. Gauss' primitivity lemma ensures that the product of primitive polynomials is primitive, so we may assume that each irreducible factor of a given $f\in \Z[x]$ is primitive.

Suppose that $f = f_1 f_2 \cdots f_k \in \Z[x]$, in which $f_1,f_2,\ldots,f_k \in \Z[x]$ are irreducible;
they are uniquely determined up to ordering.
Since the Euclidean algorithm reveals any common factors of $g,g'\in\Z[x]$, we may assume that the $f_i$ are distinct without loss of generality. Equivalently, the discriminant $D_f$ of $f$ is nonzero.
Under these circumstances, we provide an analytic formula for $k$.
Our result is unconditional and explicit.  The error term depends only upon the degree and discriminant of $f$.

\begin{theorem}\label{Theorem:Main}
Suppose that $f =f_1f_2,\cdots f_k \in \Z[x]$ is a product of distinct, irreducible nonconstant polynomials $f_1,f_2,\ldots,f_k \in \Z[x]$.
Let $f$ have degree $d \geq 1$ and leading coefficient $c$.
Write $\D_f = |c|^{(d-1)(d-2)} |D_f|$, in which $D_f$ is the discriminant of $f$.  
For $x\geq \max\{2, |D_f|, \sqrt{\D_f}\}$,
\begin{equation}\label{eq:MainUnconditional}
    \left| \frac{1}{\log\log x}\sum_{p \leq x} \frac{\omega_f(p)}{p} - k \right|
    \, \leq \, \frac{d\,\M_{\Q}( |D_f| ) + \AA + \BB(x) + \CC}{\log\log{x}},
\end{equation}
in which
\begin{align*}
    \AA &\leq d\big( \M_{\Q}(|c|) + \M_{\Q}( \sqrt{\D_f}) + 0.64 \big),\\[3pt]
    \BB(x) &\leq \frac{2 }{\log x} \left( 
 \left[ \frac{\Lambda \sqrt{\D_f}}{0.36232}\big(  0.55 d^2  + 44.86 d \big) \right] + 2 d \right)    
    , \quad \text{and}\\[3pt]
    \CC &\leq d \Big(\gamma + 1.02\,d - 0.02 + \frac{d-1}{2}\log \D_f\Big).
\end{align*}
Here $\gamma =0.57721\ldots$ is the Euler--Mascheroni constant, $\M_{\Q}$ is given by \eqref{eq:Ah}, and
\begin{equation*}
\Lambda = e^{28.2 d+5}(d+1)^{\frac{5d+5}{2}}|D_f|(\log |D_f| )^d.
\end{equation*}
\end{theorem}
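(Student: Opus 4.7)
The plan is to reduce from the composite polynomial $f$ to its irreducible factors by decomposing $\omega_f(p)$ along the factorization, apply Theorem \ref{Theorem:NFM} to each factor, and then invoke an explicit Mertens-type asymptotic for number fields to replace each resulting sum $\M_{\K_i}(x)$ (with $\K_i = \Q(\alpha_i)$ for $\alpha_i$ a root of $f_i$) by $\log\log x$ plus an effective constant and a decaying error. Dividing through by $\log\log x$ at the end produces \eqref{eq:MainUnconditional}.

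The first step is a factor-comparison argument. For any rational prime $p\nmid D_f$, the pairwise resultants $\operatorname{Res}(f_i, f_j)$ are all nonzero modulo $p$ (since their product divides $D_f$), so the reductions $\overline{f_i}\in \F_p[x]$ are pairwise coprime and the equation $f(x)\equiv 0\pmod p$ partitions cleanly among the factors: $\omega_f(p) = \sum_{i=1}^k \omega_{f_i}(p)$. For the finitely many primes $p\mid D_f$, both quantities are bounded by $d$ because $f$ and each $f_i$ are primitive. The resulting error is therefore at most $d\sum_{p\mid D_f,\,p\leq x}\tfrac{1}{p} \leq d\,\M_\Q(|D_f|)$, matching the leading term in \eqref{eq:MainUnconditional}.

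Next, since $x\geq \sqrt{\D_f}\geq \sqrt{\D_{f_i}}$ for every $i$ (via a short comparison coming from the discriminant-resultant identity for $D_f$), Theorem \ref{Theorem:NFM} applies to each factor and gives
\begin{equation*}
\sum_{p\leq x}\frac{\omega_{f_i}(p)}{p} = \M_{\K_i}(x) + A_{f_i},\qquad |A_{f_i}| \leq d_i\bigl(\M_\Q(|c_i|)+\M_\Q(\sqrt{\D_{f_i}})+0.64\bigr).
\end{equation*}
Summing over $i$ and using $\sum_i d_i = d$, $|c_i|\leq|c|$, $\D_{f_i}\leq\D_f$, together with the monotonicity of $\M_\Q$, yields $\sum_i |A_{f_i}|\leq \AA$. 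I would then invoke an explicit Mertens-type theorem for number fields of the form $\M_{\K_i}(x) = \log\log x + M_{\K_i} + \mathrm{Err}_i(x)$, in which the constant $M_{\K_i}$ admits an effective bound in terms of $\gamma$, $d_i$, and $\log|\operatorname{disc}(\K_i)|$, and the error $\mathrm{Err}_i(x)$ decays like $1/\log x$ with fully explicit dependence on $d_i$ and $|\operatorname{disc}(\K_i)|$; the quantity $\Lambda$ enters here, reflecting the discriminant-dependence that arises from an effective zero-free region for the Dedekind zeta function $\zeta_{\K_i}(s)$. Summing over $i$ and using $|\operatorname{disc}(\K_i)|\leq \D_{f_i}\leq \D_f$, the constant contributions are absorbed into $\CC$ and the error contributions into $\BB(x)$.

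The main obstacle is this last step: locating, or if necessary establishing, an explicit Mertens' theorem for number fields with constants and error terms sharp enough, and uniform enough across the family $\K_1,\ldots,\K_k$, to assemble into exactly the stated quantities $\BB(x)$ and $\CC$. A secondary but unavoidable difficulty is the bookkeeping required to pass from the per-factor invariants $|c_i|$, $|D_{f_i}|$, $\D_{f_i}$ to the single global invariants $|c|$, $|D_f|$, $\D_f$ of $f$.
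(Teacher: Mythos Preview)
Your proposal is correct and follows essentially the same route as the paper: split $\omega_f(p)$ along the irreducible factors with an error controlled by $d\,\M_{\Q}(|D_f|)$, apply Theorem~\ref{Theorem:NFM} to each $f_i$ to produce $\M_{\K_i}(x)+A_{f_i}$, and then replace each $\M_{\K_i}(x)$ by $\log\log x$ via an explicit number-field Mertens theorem; the paper carries this out using precisely the result you anticipate needing, namely \eqref{eq:MLLMB}--\eqref{eq:Lambda} from \cite{EMT4NF2}, together with the residue bounds of Lemma~\ref{lem:residue_bounds} to eliminate $\kappa_{\K_i}$ from the constants. The only cosmetic discrepancy is that the $\Lambda$ in those estimates arises from Sunley's ideal-counting bound rather than directly from a zero-free region, but this does not affect your outline.
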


Theorem \ref{Theorem:Main} produces a finite list of primes that certifies 
$f \in \Z[x]$ has exactly $k$ distinct irreducible factors: 
take $x\geq \max\{2, |D_f|, \sqrt{\D_f}\}$ such that the right-hand side 
of \eqref{eq:MainUnconditional} is less than $0.5$.
Although this is not yet practical, Theorem \ref{Theorem:Main} is a valuable proof of concept, and several avenues for improvement are discussed in Section \ref{Section:Conclusion}.
Table \ref{Table:Only} exhibits the behavior of the main term in Theorem \ref{Theorem:Main}.

The proof of Theorem \ref{Theorem:Main} relies upon Theorem \ref{Theorem:NFM} and its proof
requires recent work on explicit Mertens' theorems for number fields and residue bounds for Dedekind zeta functions \cite{EMT4NF2}. We must consider the fields $\K_i$ generated by the roots of $f_i$ and quantify the resulting error in terms of the degree, discriminant and leading coefficient of $f$.
Moreover, each step must be uniform and explicit.

\begin{table}
\begin{equation*}
\begin{array}{c|ccccc}
f(x) & \text{Factorization} & k & F(100) & F(1{,}000) & F(10{,}000) \\
\hline
x^4+1 & x^4+1& 1 & 0.6377 & 0.6729 & 0.7108\\
x^4 + 4 & (x^2-2)(x^2+2) & 2 & 1.6164 & 1.6712 & 1.7109\\
x^4-1 &(x^2-1)(x-1)(x+1)& 3 & 2.6781 & 2.7222 & 2.7543\\
x^4-5x^2 + 4 & (x-1)(x+1)(x-2)(x+2) & 4 & 3.6306 & 3.6870 & 3.7227\\
\end{array}
\end{equation*}
\caption{$F(x) = \tfrac{1}{\log\log{x}}\sum_{p\leq x} \frac{\omega_f(p)}{p}$, the main term in Theorem \ref{Theorem:Main}.}
\label{Table:Only}
\end{table}

\subsection*{Structure}
This paper is structured as follows.
The preliminaries are covered in Section \ref{Section:Preliminaries}.
Section \ref{Section:Mertens} contains the proof of Theorem \ref{Theorem:NFM}.
The proof of Theorem \ref{Theorem:Main} is the focus of Section \ref{Section:Main}.
Finally, we consider future avenues of research in Section \ref{Section:Conclusion}.

\subsection*{Acknowledgments}
The authors thank Lenny Fukshansky, Florian Luca, Valeriia Starichkova, and
Tim Trudgian for helpful comments and suggestions.

\section{Preliminaries}\label{Section:Preliminaries}
In this section, we review results needed for the proof of Theorem \ref{Theorem:Main}.
Subsection \ref{Subsection:Discriminant} concerns several important notions from elimination theory,
while Subsection \ref{Subsection:Dedekind} presents explicit unconditional bounds on Dedekind zeta residues.
We explore a Mertens-type sums for algebraic number fields in Subsection \ref{Subsection:Mertens}.
Finally, Subsection \ref{Subsection:PrimeZeta} contains a few remarks about a function related to the prime zeta function.

\subsection{Resultants and discriminants}\label{Subsection:Discriminant}
Although the material in this subsection is classical, it is surprisingly difficult to
find it all stated in one convenient reference.
Let $f(x) = a_n x^n + a_{n-1} x^{n-1} + \cdots + a_0$
and $g(x) = b_m x^m + b_{m-1} x^{m-1} + \cdots + b_0$ be complex polynomials of positive degrees $n$ and $m$, respectively.
Their \emph{resultant} $R(f(x),g(x);x)$ is the determinant of the 
$(m+n)\times(m+n)$ \emph{Sylvester matrix}
\begin{equation}\label{eq:Sylvester}
{\footnotesize
\begin{bmatrix}
a_{n}&0&\cdots &0&b_{m}&0&\cdots &0\\
a_{n-1}&a_{n}&\cdots &0&b_{m-1}&b_{m}&\cdots &0\\[-3pt]
a_{n-2}&a_{n-1}&\ddots &0&b_{m-2}&b_{m-1}&\ddots &0\\
\vdots &\vdots &\ddots &a_{n}&\vdots &\vdots &\ddots &b_{m}\\[-3pt]
a_{0}&a_{1}&\cdots &\vdots &b_{0}&b_{1}&\cdots &\vdots \\[-3pt]
0&a_{0}&\ddots &\vdots &0&b_{0}&\ddots &\vdots \\
\vdots &\vdots &\ddots &a_{1}&\vdots &\vdots &\ddots &b_{1}\\
0&0&\cdots &a_{0}&0&0&\cdots &b_{0}
\end{bmatrix}.
}
\end{equation}
We often suppress the variable $x$ and write $R(f,g)$ instead of
$R(f(x),g(x);x)$.   Since \eqref{eq:Sylvester} has $m$ columns with the coefficients of $f$
and $n$ columns with the coefficients of $g$, it follows that
$R( \alpha  f, \beta g) = \alpha^m \beta^n R(f,g)$ for $\alpha , \beta \in \C$.
The definition ensures that $R(f,g) \in \Z$ whenever $f,g \in \Z[x]$.

We now need an important fact about polynomials.
Suppose $f\in \Z[x]$ and $f = AB$, in which $A,B\in \Q[x]$. 
Gauss' lemma ensures that $f = ab$, where $a,b \in \Z[x]$ are of the form
$a = \mu A$ and $b = \mu^{-1} B$ for some $\mu \in \Q$ \cite[Prop.~5, p.~303]{DF}.

\begin{lemma}\label{Lemma:ZeroResult}
Let $f,g \in \Z[x]$ have positive degree.  Then $R(f,g) = 0$
if and only if $f$ and $g$ have a common divisor in $\Z[x]$ of positive degree.
\end{lemma}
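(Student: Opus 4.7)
The plan is to prove the equivalence via the standard linear-algebra interpretation of the Sylvester matrix and then descend from $\Q[x]$ to $\Z[x]$ using Gauss's lemma. The first step is to identify the Sylvester matrix in \eqref{eq:Sylvester} with the matrix, in the monomial bases, of the $\Q$-linear map
\[
T : \{ u \in \Q[x] : \deg u < m \} \oplus \{ v \in \Q[x] : \deg v < n \} \longrightarrow \{ h \in \Q[x] : \deg h < m+n \}, \quad (u,v) \mapsto uf + vg.
\]
Both source and target have $\Q$-dimension $m+n$, so $R(f,g) = \det T = 0$ is equivalent to the existence of $(u,v) \neq (0,0)$ with $\deg u < m$, $\deg v < n$, and $uf + vg = 0$. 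This is the workhorse equivalence for everything that follows.

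For the $(\Leftarrow)$ direction, from $h \in \Z[x]$ of positive degree dividing both $f$ and $g$ in $\Z[x]$, I would write $f = hF$ and $g = hG$ with $F, G \in \Z[x]$; then $Gf - Fg = 0$ together with $\deg G < m$ and $\deg F < n$ produces a nontrivial element of $\ker T$, so $R(f,g) = 0$. For the $(\Rightarrow)$ direction, I would start from a nontrivial kernel element $(u,v)$. We may assume $v \neq 0$ (else $uf = 0$ with $f \neq 0$ forces $u = 0$, a contradiction). Then $uf = -vg$ in the UFD $\Q[x]$ gives $g \mid uf$; if $\gcd(f,g) = 1$ in $\Q[x]$, then $g \mid u$, which together with $\deg u < m = \deg g$ forces $u = 0$ and hence $v = 0$, a contradiction. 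So $f$ and $g$ share a common factor in $\Q[x]$ of positive degree.

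To descend to $\Z[x]$, I would rescale this common factor to a primitive polynomial $h \in \Z[x]$ of positive degree. Writing $f = h q_1$ and $g = h q_2$ in $\Q[x]$, the version of Gauss's lemma stated immediately before the lemma yields $\mu_i \in \Q$ with $\mu_i h, \mu_i^{-1} q_i \in \Z[x]$; primitivity of $h$ forces each $\mu_i$ to be an integer, and a short content comparison then forces $\mu_i = \pm 1$, whence $q_i \in \Z[x]$ and $h$ is a common divisor of $f$ and $g$ in $\Z[x]$ of positive degree.

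The main obstacle is really just the linear-algebra identification in the first step: although entirely standard, it requires a careful comparison of the Sylvester entries with the coefficients of $uf + vg$, after which the two implications and the descent to $\Z[x]$ follow from routine arguments in unique factorization and primitivity.
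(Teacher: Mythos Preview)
Your argument is essentially the paper's: interpret the Sylvester matrix as the matrix of $(u,v)\mapsto uf+vg$, deduce that $R(f,g)=0$ is equivalent to a common factor of positive degree over the field of coefficients, and then descend to $\Z[x]$ via Gauss's lemma. The only real difference is cosmetic: the paper works over $\C$ and phrases the intermediate step as ``$f,g$ share a common root $\alpha$, hence are both divisible by the minimal polynomial $m_\alpha\in\Q[x]$,'' whereas you stay inside $\Q[x]$ and use the UFD divisibility argument $g\mid uf,\ \gcd(f,g)=1\Rightarrow g\mid u$. Both routes are standard and equally short.

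One small slip in your descent: the claim that a content comparison forces $\mu_i=\pm1$ is false in general (take $f=2x^2-2$, $h=x-1$, $q_1=2(x+1)$; then $\mu_1=2$ also works). Fortunately you don't need it: once primitivity of $h$ gives $\mu_i\in\Z$, you immediately get $q_i=\mu_i\cdot(\mu_i^{-1}q_i)\in\Z[x]$, so $h\mid f$ and $h\mid g$ in $\Z[x]$ as desired.
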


\begin{proof}
Let $\mathcal{P}_i$ denote the $\C$-vector space
of polynomials of degree at most $i-1$.  Then the matrix in \eqref{eq:Sylvester} 
represents the linear map $(u,v) \mapsto uf+vg$ from $\mathcal{P}_{m} \times \mathcal{P}_{n}$
to $\mathcal{P}_{m+n}$ with respect to the corresponding monomial bases (listed in order of descending degree).
Thus, $R(f,g) = 0$ if and only if $f,g$ have a common divisor in $\C[x]$ of positive degree, that is,
if and only if $f,g$ share a common root in $\alpha \in \C$.
Since $\alpha$ is algebraic and $f,g \in \Z[x]$, the preceding is equivalent to asserting that $f = m_{\alpha} F$
and $g = m_{\alpha} G$, in which $m_{\alpha} \in \Q[x]$ denotes the minimal polynomial of $\alpha$ over $\Q$ and $F,G \in \Q[x]$.
The remarks from the preceding paragraph imply that
$R(f,g) = 0$ if and only if $f,g$ have a common divisor in $\Z[x]$ of positive degree.
\end{proof}

For $f,g \in \C[x]$ with leading coefficients $a_n$ and $b_m$, respectively,
\begin{equation}\label{eq:ResultantProduct}
R(f,g) = a_n^m b_m^n 
\prod_{ \substack{1 \leq i \leq m \\ 1 \leq j \leq n}} (\lambda_i - \mu_j),
\end{equation}
in which $\lambda_1,\lambda_2,\ldots,\lambda_n$ and $\mu_1,\mu_2,\ldots,\mu_m$ are the roots of 
$f$ and $g$ in $\C$, counted by multiplicity \cite[Ex.~31c, p.~621]{DF}.
For $c \in \C$ and $h \in \C[x]$,  \eqref{eq:ResultantProduct} implies
\begin{equation}\label{eq:ResultantProperties}
R(fg,h) = R(f,h) R(g,h)
\qquad \text{and}\qquad
R(f(cx),g(cx)) = c^{mn} R(f,g).
\end{equation}

The \emph{discriminant}
\begin{equation*}
D(f;x) = \frac{(-1)^{n(n-1)/2}}{a_n} R(f(x),f'(x);x)
\end{equation*}
of $f \in \C[x]$ with degree $n$ and leading coefficient $a_n$
is a homogeneous polynomial of degree $2n-2$ in the coefficients of $f$.
The choice of sign ensures that $D(f;x) \geq 0$ if the roots of $f$ are real.  
We often write $D_f$ or $D_{f(x)}$ instead of $D(f;x)$.

If $\deg f = 1$, then $D_f = 1$.
If $f \in \Z[x]$ and $\deg f \geq 2$, then $D_f \in \Z$.  Lemma \ref{Lemma:ZeroResult} ensures that
$D_f = 0$ if and only if $f$ and $f'$ share common divisor in $\Z[x]$ of positive degree; that is, if and only if $f$ has a repeated root in $\C$.
This yields the following.

\begin{lemma}\label{Lemma:IrreducibleDiscriminant}
If $f \in \Z[x]$ is irreducible and has positive degree, then $D_f \neq 0$.
\end{lemma}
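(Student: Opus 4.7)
The plan is to reduce the claim to Lemma \ref{Lemma:ZeroResult} via the defining identity $D_f = \pm \tfrac{1}{a_n} R(f,f')$, and then exploit irreducibility to rule out a common factor of $f$ and $f'$.

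First I would dispose of the trivial case $\deg f = 1$, where the text has already noted $D_f = 1 \neq 0$. So assume $\deg f \geq 2$, and suppose for contradiction that $D_f = 0$. Since $D_f$ differs from $R(f,f')$ only by the nonzero factor $\pm 1/a_n$, this forces $R(f,f') = 0$. Note that $f' \in \Z[x]$ has positive degree (namely $\deg f - 1 \geq 1$), so Lemma \ref{Lemma:ZeroResult} applies and yields a common divisor $h \in \Z[x]$ of $f$ and $f'$ with $\deg h \geq 1$.

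Next I would derive the contradiction from irreducibility. Because $f$ is irreducible in $\Z[x]$ and $h \mid f$ with $\deg h \geq 1$, the factor $h$ must be an associate of $f$ in $\Z[x]$; in particular $\deg h = \deg f$. But $h \mid f'$ forces $\deg h \leq \deg f' = \deg f - 1$, which is impossible. Hence $D_f \neq 0$.

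The only subtle point — really the one place the argument could go wrong — is ensuring that Lemma \ref{Lemma:ZeroResult} is applicable, since it requires both inputs to have positive degree. This is automatic here because the hypothesis $\deg f \geq 2$ gives $\deg f' \geq 1$. The case $\deg f = 1$ has to be handled separately precisely because $f'$ is then a nonzero constant and the resultant machinery does not directly apply, but in that case $D_f = 1$ by definition and there is nothing to prove.
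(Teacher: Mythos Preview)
Your proof is correct and follows essentially the same approach as the paper: both dispose of the linear case via $D_f=1$, then for $\deg f\ge 2$ invoke Lemma~\ref{Lemma:ZeroResult} to convert $D_f=0$ into the existence of a common divisor of $f$ and $f'$ in $\Z[x]$ of positive degree, and finally use irreducibility to obtain a contradiction. The paper phrases the last step as ``$f$ has a repeated root in $\C$'' while you argue directly via degrees, but these are the same argument.
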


The definition and \eqref{eq:ResultantProduct} yield
\begin{equation*}
D_f = a_n^{2n-2} \prod_{1 \leq i < j\leq n} (\lambda_i - \lambda_j)^2,
\end{equation*}
in which $\lambda_1,\lambda_2,\ldots,\lambda_n$ are the roots of $f$, repeated according to multiplicity.
For $\alpha \in \C$, \eqref{eq:ResultantProperties} implies
\begin{equation}\label{eq:DiscriminantFormulas}
D_{f(\alpha x)} = \alpha^{n(n-1)} D_{f(x)}
\qquad \text{and} \qquad
D_{gh} = D_g D_h R_{g,h}^2.
\end{equation}
If $f,g \in \Z[x]$ and $g|f$, then $D_g | D_f$.  If, in addition, $D_f \neq 0$,
then $|D_g| \leq |D_f|$.

\begin{lemma}\label{Lemma:ResultantDivide}
If $f =f_1f_2,\cdots f_k \in \Z[x]$ is a product of distinct irreducible polynomials
$f_1,f_2,\ldots,f_k \in \Z[x]$ of positive degree, then $R(f_i,f_j) | D_f$ for $i\neq j$.
\end{lemma}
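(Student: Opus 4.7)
The plan is to derive the divisibility by iterating the multiplicativity formula for discriminants given in \eqref{eq:DiscriminantFormulas}. Fix distinct indices $i, j$ and write $f = (f_i f_j) \cdot h$, where $h = \prod_{\ell \neq i, j} f_\ell \in \Z[x]$ is the product of the remaining irreducible factors (with $h = 1$ when $k = 2$).

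First, I would apply $D_{gh} = D_g D_h R(g,h)^2$ with $g = f_i f_j$, obtaining $D_f = D_{f_i f_j}\cdot D_h \cdot R(f_i f_j, h)^2$. A second application to $D_{f_i f_j}$ gives $D_{f_i f_j} = D_{f_i} D_{f_j} R(f_i, f_j)^2$. Substituting,
\[
D_f \;=\; D_{f_i}\, D_{f_j}\, D_h\, R(f_i, f_j)^2\, R(f_i f_j, h)^2.
\]
Since every factor on the right lies in $\Z$, we conclude $R(f_i, f_j)^2 \mid D_f$, which is in fact stronger than what the lemma claims.

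The only mild wrinkle is the case $k = 2$, where $h$ is constant and $D_h$ is not defined by the formula; in that case, the single identity $D_f = D_{f_i} D_{f_j} R(f_i, f_j)^2$ already yields the conclusion directly. I do not anticipate any genuine obstacle: everything reduces to the two algebraic identities in \eqref{eq:DiscriminantFormulas} together with the fact that all quantities involved are integers. No nonvanishing hypothesis is needed for the divisibility to make sense, but it is reassuring to note that the standing assumption that the $f_i$ are distinct and irreducible, combined with Lemmas \ref{Lemma:ZeroResult} and \ref{Lemma:IrreducibleDiscriminant}, guarantees that $D_f$, each $D_{f_i}$, and each $R(f_i, f_j)$ is nonzero, so the divisibility statement is not vacuous.
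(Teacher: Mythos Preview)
Your argument is correct and matches the paper's approach: both proofs iterate the identity $D_{gh} = D_g D_h R(g,h)^2$ from \eqref{eq:DiscriminantFormulas} to exhibit $R(f_i,f_j)^2$ as a factor of $D_f$, with the paper phrasing this as ``induction and \eqref{eq:DiscriminantFormulas}'' and you carrying it out explicitly via the grouping $f = (f_i f_j)\cdot h$. Your observation that in fact $R(f_i,f_j)^2 \mid D_f$ is a harmless strengthening, and your handling of the degenerate case $k=2$ is appropriate.
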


\begin{proof}
Since $f_1,f_2,\ldots,f_k \in \Z[x]$ are distinct, irreducible, and have positive degree,
Lemma \ref{Lemma:ZeroResult} ensures that $R(f_i,f_j) \neq 0$ for $i\neq j$.
Moreover, $D_f \neq 0$ since $f$ has no repeated factors.
Induction and \eqref{eq:DiscriminantFormulas} imply $R(f_i,f_j) | D_f$ for $i\neq j$.
\end{proof}

The final lemma of this subsection concerns the number of solutions $\omega_f(p)$ to
$f(x) \equiv 0 \pmod{p}$ for a rational prime $p$.  Note that if $f\in \Z[x]$ is reducible, then
$\omega_f(p) > \deg f$ is possible. For example, consider $3x-6$ modulo $3$.

\begin{lemma}\label{Lemma:Omega}
If $f \in \Z[x]$ is irreducible and $\deg f \geq 1$, then $\omega_f(p) \leq \deg f$ for all $p$.
\end{lemma}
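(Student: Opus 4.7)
The plan is to reduce $f$ modulo $p$ and invoke the classical fact that a nonzero polynomial over a field has at most as many roots as its degree. The number of solutions $\omega_f(p)$ to $f(x) \equiv 0 \pmod{p}$ equals the number of roots of the reduction $\bar f \in \F_p[x]$ of $f$. Since $\F_p$ is a field, if $\bar f$ is not the zero polynomial then the number of its roots is bounded above by $\deg \bar f \leq \deg f$.

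The only subtlety (and the main obstacle, if one can call it that) is to ensure that $\bar f \neq 0$ in $\F_p[x]$; the reducible example $3x-6$ in the paragraph preceding the lemma shows that this step is essential. Here is where the irreducibility hypothesis is used. As recalled in the discussion of Gauss' lemma, a nonconstant polynomial in $\Z[x]$ is irreducible in $\Z[x]$ if and only if it is primitive and irreducible in $\Q[x]$. In particular, the irreducibility of $f \in \Z[x]$ forces $f$ to be primitive, so the greatest common divisor of its coefficients is $1$. Consequently, no rational prime $p$ divides every coefficient of $f$, and hence $\bar f \neq 0$ in $\F_p[x]$.

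Combining these observations yields the inequality
\begin{equation*}
\omega_f(p) \leq \deg \bar f \leq \deg f,
\end{equation*}
valid for every rational prime $p$, which is the desired conclusion. No deeper elimination-theoretic input (resultants, discriminants) is needed for this lemma; it is a direct consequence of primitivity and the standard degree bound for roots of polynomials over a field.
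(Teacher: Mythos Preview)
Your proof is correct and follows essentially the same approach as the paper: use irreducibility to deduce primitivity, so that the reduction $\bar f \in \F_p[x]$ is nonzero, and then invoke Lagrange's theorem. The paper adds a preliminary case split ($\deg f \geq p$ versus $\deg f < p$), but your direct argument via $\omega_f(p) \leq \deg \bar f \leq \deg f$ is cleaner and avoids that detour.
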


\begin{proof}
If $\deg f \geq p$, then $\omega_f(p) \leq p \leq \deg f$.
Now suppose that $\deg f < p$.  Since $f$ is irreducible, at least 
one coefficient of $f$ is nonzero modulo $p$, and hence $f$ has positive degree modulo $p$.
Lagrange's theorem ensures that $\omega_f(p) \leq \deg f$. 
\end{proof}

\subsection{Dedekind zeta residues}\label{Subsection:Dedekind}
Let $\K$ be a number field of degree $n_{\K}$ with ring of algebraic integers $\O_{\K}$, and
let $\Delta_{\K} $ denote the discriminant of $\K$. 
In what follows, $\PP\subset\O_{\K}$ denotes a prime ideal, $\a\subset\O_{\K}$ an ideal (not necessarily prime), 
and $N(\a)$ the norm of $\a$.
The Dedekind zeta function $\zeta_{\K}(s) = \sum_{\a}N(\a)^{-s}$ of $\K$ is analytic on $\Re{s} > 1$ 
and extends meromorphically to $\C$, except for a simple pole at $s=1$. The analytic class number formula asserts that the residue of $\zeta_{\K}(s)$ at $s=1$ is
\begin{equation*}
\kappa_{\K} = \frac{2^{r_1}(2\pi)^{r_2}h_{\K}R_{\K}}{w_{\K}\sqrt{| \Delta_{\K} |}},
\end{equation*}
in which $r_1$ is the number of real places of $\K$, $r_2$ is the number of complex places of $\K$, $w_{\K}$ is the number of roots of unity in $\K$, $h_{\K}$ is the class number of $\K$, and $R_{\K}$ is the regulator of $\K$ \cite{Lang}.
The nontrivial zeros of $\zeta_{\K}(s)$ lie in the critical strip $0 < \Re s < 1$, in which there might also exist an exceptional, real zero $0 < \beta < 1$. The Generalized Riemann Hypothesis (GRH), which remains unproven, asserts that the nontrivial zeros of $\zeta_{\K}(s)$ satisfy $\Re s = \frac{1}{2}$ and that no exceptional zero exists.

We require unconditional bounds for the residue $\kappa_{\K}$ of $\zeta_{\K}(s)$ at $s=1$.   
If $d=1$, then $\K = \Q$ and hence $\zeta_{\Q}(s)$ is the Riemann zeta function, for which $\kappa_{\Q} = 1$.  Consequently, we restrict our attention to the case $d \geq 2$.

\begin{lemma}\label{lem:residue_bounds}
Let $g \in \Z[x]$ be irreducible with degree $d \geq 2$ and leading coefficient $c$.
If $g(\alpha) = 0$ and $\K = \Q(\alpha)$, then 
\begin{equation}\label{eq:KappaInequality}
     \frac{0.36232}{\sqrt{\D_g}}
    \,\leq\, \kappa_{\K} \,\leq\,
    \left(\frac{e\log \D_g}{2(d-1)}  \right)^{d - 1},
\end{equation}
in which $\D_g = |c|^{(d-1)(d-2)}|D_g|$.
\end{lemma}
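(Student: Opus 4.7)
The plan is to reduce Lemma \ref{lem:residue_bounds} to known explicit bounds on $\kappa_{\K}$ phrased in terms of the field discriminant $|\Delta_{\K}|$, after first showing that $|\Delta_{\K}| \leq \D_g$.

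First, I would manufacture an algebraic integer generating $\K$. Since $g \in \Z[x]$ has leading coefficient $c$ and $g(\alpha) = 0$, the element $\beta := c\alpha$ is an algebraic integer, $\K = \Q(\alpha) = \Q(\beta)$, and its monic integer minimal polynomial is $h(x) := c^{d-1} g(x/c) \in \Z[x]$. If $\alpha_1,\ldots,\alpha_d$ are the roots of $g$ in $\C$, then $c\alpha_1,\ldots,c\alpha_d$ are the roots of $h$, and combining this with $D_g = c^{2(d-1)}\prod_{i<j}(\alpha_i - \alpha_j)^2$ gives
\begin{equation*}
|D_h| \;=\; |c|^{d(d-1)} \cdot \frac{|D_g|}{|c|^{2(d-1)}} \;=\; |c|^{(d-1)(d-2)}|D_g| \;=\; \D_g.
\end{equation*}
Since $\Z[\beta] \subseteq \O_{\K}$ is an order in $\K$, the standard index identity $D_h = [\O_{\K}:\Z[\beta]]^2\,\Delta_{\K}$ then yields $|\Delta_{\K}| \leq \D_g$.

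Next, I would import two explicit residue bounds stated in terms of $|\Delta_{\K}|$. Louboutin's unconditional inequality
\begin{equation*}
\kappa_{\K} \;\leq\; \left(\frac{e\log|\Delta_{\K}|}{2(d-1)}\right)^{d-1}
\end{equation*}
holds for every number field of degree $d\geq 2$, and its right-hand side is monotone increasing in $|\Delta_{\K}|$ on the range $|\Delta_{\K}| > 1$ (which is automatic for $d \geq 2$ by Minkowski's discriminant bound). Applying $|\Delta_{\K}| \leq \D_g$ therefore produces the upper bound in \eqref{eq:KappaInequality}. For the lower bound, I would appeal to the explicit estimate $\kappa_{\K} \geq 0.36232/\sqrt{|\Delta_{\K}|}$ from \cite{EMT4NF2}; since $|\Delta_{\K}| \leq \D_g$ implies $1/\sqrt{|\Delta_{\K}|} \geq 1/\sqrt{\D_g}$, this immediately gives $\kappa_{\K} \geq 0.36232/\sqrt{\D_g}$.

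The main obstacle is essentially careful verification rather than analysis: one must correctly track the $c$-dependence in the transformation $g \mapsto h$ so that the factor $|c|^{(d-1)(d-2)}$ comes out right, and confirm the precise form of the constants $e/2$ and $0.36232$ in the cited residue inequalities. No new analytic input is required; the substance lies in re-expressing standard $|\Delta_{\K}|$-bounds in terms of the polynomial-discriminant quantity $\D_g$, which is the form needed for the Mertens-type sum of Theorem \ref{Theorem:NFM} and its application in Theorem \ref{Theorem:Main}.
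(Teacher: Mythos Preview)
Your proposal is correct and follows essentially the same approach as the paper: reduce to a monic integral generator $\beta = c\alpha$, compute $|D_h| = \D_g$, use the index formula to obtain $|\Delta_{\K}| \leq \D_g$, and then invoke Louboutin's upper bound together with the lower bound $\kappa_{\K} \geq 0.36232/\sqrt{|\Delta_{\K}|}$ from \cite{EMT4NF2}. The only cosmetic differences are that the paper computes $D_h$ via the scaling identity $D_{h(cx)} = c^{d(d-1)}D_h$ rather than the root product, and it reproduces the short class-number-formula argument (using $2^{r_1}(2\pi)^{r_2}\geq 4$ and Friedman's $R_{\K}/w_{\K}\geq 0.09058$) behind the constant $0.36232$ rather than citing it.
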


\begin{proof}
Let $g \in \Z[x]$ be irreducible with degree $d \geq 2$ and leading coefficient $c$. Then 
$c^{d - 1} g(x) = h(cx)$, in which $h \in \Z[x]$ is monic, irreducible, and has degree $d$.
Let $\alpha \in \C$ be such that $g(\alpha) = 0$.  Then $h(c\alpha) = 0$ and
$\K = \Q(\alpha) = \Q(c\alpha)$; the degree $n_{\K}$ of the number field $\K$ is $d$. 
Since the discriminant of $c^{d - 1} g(x)$ is $(c^{(d-1)})^{2d-2} D_g$ and the 
discriminant of $h(cx)$ is $c^{d(d-1)} D_h$, it follows that
\begin{equation}\label{eq:DiscriminantGH}
D_h = c^{(d-1)(d-2)} D_g.
\end{equation}
From \cite[Prop.~I.2.12]{Neukirch} observe that
\begin{equation*}
    1 \leq \big[\O_{\K} : \Z[c\alpha]\big]^2
    = \bigg( \frac{ \det \Z[c\alpha] }{ \det \O_{\K} } \bigg)^2
    = \frac{|D_h| }{ | \Delta_{\K} |} 
\end{equation*}
and hence
\begin{equation}\label{eq:DiscriminantInequality}
| \Delta_{\K} | \leq |D_h| = \D_g.
\end{equation}
Since $n_{\K} = d \geq 2$,
Louboutin \cite[Thm.~1]{Louboutin00} provides the first inequality in
\begin{equation}\label{eq:KappaUpper}
\kappa_{\K}
    \leq \left(\frac{e\log{| \Delta_{\K} |}}{2(n_{\K} - 1)}\right)^{n_{\K} - 1} 
    \leq \left(\frac{e \log \D_g}{2(d-1)} \right)^{d - 1}.
\end{equation}
The lower bound follows from \cite{EMT4NF2}; the argument is short and reproduced here.
Since $n_{\K} = r_1 + 2r_2\geq 2$, we have $2^{r_1}(2\pi)^{r_2} \geq 2^{2}(2\pi)^{0} = 4$.
Friedman \cite[Thm.~B]{Friedman89} proved that $R_{\K}/w_{\K} \geq 0.09058$, improving on Zimmert \cite{Zimmert} (see \cite[Thm.~7, p.~273]{Lang}).
Thus,
\begin{equation}\label{eq:KappaLower}
    \kappa_{\K} \geq \frac{2^{r_1}(2\pi)^{r_2}R_{\K}}{w_{\K}\sqrt{ | \Delta_{\K}|}} > \frac{4\cdot 0.09058}{\sqrt{\Delta_{\K}}} = \frac{0.36232}{\sqrt{|\Delta_{\K}|}} \geq  \frac{0.36232}{\sqrt{\D_g}}.
\end{equation}
This concludes the proof.
\end{proof}

\begin{remark}\label{Remark:AlternateBound}
With respect to $\D_g$, there are asymptotically superior lower bounds
available (with explicit constants).  For example, one can obtain
\begin{equation*}
\kappa_{\K} > \frac{0.015744605}{d\,d! \,\D_g^{1/d}}
\end{equation*}
from Stark \cite{Stark} by tracking the constants involved \cite{EMT4NF2}.  
However, the dependence upon $d = n_{\K}$ is poor, so
we use the degree-independent lower bound in \eqref{eq:KappaInequality} instead.
\end{remark}

\subsection{The function $\M_{\K}(x)$}\label{Subsection:Mertens}
For a number field $\K$, define the Mertens-type sum
\begin{equation*}
\M_{\K}(x) = \sum_{ N(\PP)\leq x} \frac{1}{N(\PP)},
\end{equation*}
where the sum runs over the prime ideals $\PP$ of $\mathcal{O}_\K$.
If $\K = \Q$, then the sum reduces to the classical Mertens sum 
\begin{equation}\label{eq:ClassicalMertens}
\M_{\Q}(x) = \sum_{p\leq x} \frac{1}{p} \leq \log \log x + C_{\Q} +  \dfrac{1}{(\log x)^2} ,
\end{equation}
in which $C_{\Q} = 0.26149\ldots$ is the Meissel--Mertens constant
and \eqref{eq:ClassicalMertens} holds unconditionally for $x> 1$ \cite[(3.20)]{Rosser}.
For $\K \neq \Q$, the first two authors \cite{EMT4NF2} used an ideal-counting estimate of Sunley 
\cite{SunleyThesis, SunleyBAMS} to prove
unconditionally for $x \geq 2$ that
\begin{equation}\label{eq:MLLMB}
\M_{\K}(x) = \log \log x + C_{\K} + B_{\K}(x),
\end{equation}
in which
\begin{equation}\label{eq:CInequality}
\gamma + \log \kappa_{\K} - d_{\K} \,\leq\, C_{\K} \,\leq\, \gamma + \log \kappa_{\K}
\end{equation}
and
\begin{equation}
|B_{\K}(x)| \leq \frac{2 \Upsilon_{\K}}{\log x}, \label{eq:B}
\end{equation}
where
\begin{equation}\label{eq:Upsilon}
\Upsilon_{\K}=  \left(  \frac{(n_{\K}+1)^2}{2\kappa_{\K}(n_{\K}-1)}\Lambda_{\K}   + 1\right)
+  \frac{0.55\, \Lambda_{\K}n_{\K}(n_{\K}+1)}{\kappa_{\K}} + n_{\K} + 
 40.31 \dfrac{\Lambda_{\K}n_{\K}}{\kappa_{\K}} 
\end{equation}
and
\begin{equation}\label{eq:Lambda}
\Lambda_{\K} 
= e^{28.2n_{\K}+5} (n_{\K}+1)^{ \frac{5(n_{\K}+1)}{2}} | \Delta_{\K} |^{\frac{1}{n_{\K}+1}} 
(\log | \Delta_{\K} |)^{n_{\K}}.
\end{equation}
The Dedekind zeta residue $\kappa_{\K}$ in \eqref{eq:Upsilon} can be
explicitly bounded by Lemma \ref{lem:residue_bounds}.

\subsection{The function $\P(x)$}\label{Subsection:PrimeZeta}

For $x\geq 1$, define
\begin{equation*}
\P(x) = \sum_{2\leq k\leq x} \frac{P(k)}{k},
\end{equation*}
in which $P(s) = \sum_p p^{-s}$ is the prime zeta function. 
Then $\P(1) = 0$ and $\P(n)$ tends monotonically to  $0.31571845\ldots$ as $n \to \infty$; see Table \ref{Table:P}.
That $\lim_{x\to\infty}\P(x)$ exists follows from an old result of Euler (1781) \cite[(2.12), p.~133]{MR958860}:
\begin{equation}\label{eq:EulerBound_for_Px}
    \P(x) < \sum_{k \geq 2} \frac{1}{k} \sum_p \frac{1}{p^k}
    <  \sum_{k = 2}^{\infty} \frac{1}{k}\sum_{n= 2}^{\infty} \frac{1}{n^k} 
    = \sum_{k = 2}^{\infty} \frac{\zeta(k)-1}{k} 
    = 1-\gamma.
\end{equation}%
A convenient upper bound is $\P(x) \leq 0.32$ for all $x \geq 2$.

\begin{table}\small
\begin{equation*}
\begin{array}{cc|cc}
x & \P(x) & x & \P(x)\\
\hline
 1 & 0 & 6 & 0.3136222260 \\
 2 & 0.2261237100 & 7 & 0.3148056307 \\
 3 & 0.2843779231 & 8 & 0.3153133064 \\
 4 & 0.3036262081 & 9 & 0.3155360250 \\
 5 & 0.3107772116 & 10 & 0.3156353854 \\
\end{array}
\end{equation*}
\caption{Approximate values of $\P(x)$.}
\label{Table:P}
\end{table}


\section{Proof of Theorem \ref{Theorem:NFM}}\label{Section:Mertens}

The proof of Theorem \ref{Theorem:NFM} requires the next lemma, which handles the monic case.

\begin{lemma}\label{Lemma:Components1}
Let $h(x) \in \Z[x]$ be monic and irreducible with degree $d \geq 1$.
Let $\alpha \in \C$ be such that $h(\alpha)=0$ and let $\K = \Q(\alpha)$.  
For $x > \sqrt{|D_h|}$,
\begin{equation*}
    \bigg| \M_{\K}(x) - \sum_{p\leq x} \frac{\omega_h(p)}{p} \bigg|
    <  \big( \M_{\Q}(\sqrt{|D_h|}) + 0.64\big) d.
\end{equation*}
\end{lemma}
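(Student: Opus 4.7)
The plan is to compare the two sums by inserting the intermediate quantity $\omega_h^*(p):=\#\{\PP\subset\O_\K : \PP\mid p,\ N(\PP)=p\}$, i.e.\ the number of degree-one prime ideals above $p$. Splitting $\M_\K(x)$ according to residue degree yields
\[
\M_\K(x)=\sum_{p\leq x}\frac{\omega_h^*(p)}{p}+T(x),\qquad T(x):=\sum_{\substack{\PP\subset\O_\K\\ f(\PP)\geq 2,\ N(\PP)\leq x}}\frac{1}{N(\PP)},
\]
so by the triangle inequality the task reduces to (a) estimating the defect $\omega_h(p)-\omega_h^*(p)$ summed over $p\leq x$, and (b) bounding the tail $T(x)$.

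For (a), I would invoke the Kummer--Dedekind theorem: for every rational prime $p$ with $p\nmid [\O_\K:\Z[\alpha]]$, the decomposition of $p\O_\K$ in $\O_\K$ mirrors the factorization of $h(x)\bmod p$, and in particular the degree-one primes above $p$ correspond bijectively to the distinct linear factors of $h\bmod p$, equivalently to the distinct roots of $h$ modulo $p$. Hence $\omega_h^*(p)=\omega_h(p)$ whenever $p\nmid [\O_\K:\Z[\alpha]]$. The discriminant--index identity $[\O_\K:\Z[\alpha]]^2\,|\Delta_\K|=|D_h|$ (in the same spirit as the computation leading to \eqref{eq:DiscriminantInequality}, with $\alpha$ replacing $c\alpha$ in the monic case) then forces every exceptional prime to satisfy $p^2\mid |D_h|$, hence $p\leq\sqrt{|D_h|}$. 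Combined with the trivial bound $|\omega_h(p)-\omega_h^*(p)|\leq d$ (each quantity is at most $d$, by Lemma \ref{Lemma:Omega} and $\sum_{\PP\mid p}e(\PP)f(\PP)=d$ respectively), this yields a contribution of at most $d\cdot\M_\Q(\sqrt{|D_h|})$.

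For (b), the identity $\sum_{\PP\mid p}e(\PP)f(\PP)=d$ also gives $\#\{\PP\mid p : f(\PP)=f\}\leq d/f$, and extending the range of the inner sum from $p\leq x^{1/f}$ to all primes only enlarges the bound. Rewriting the result in terms of the function $\P$ from Subsection \ref{Subsection:PrimeZeta},
\[
T(x)\leq \sum_{f\geq 2}\frac{d}{f}\sum_{p}\frac{1}{p^f} = d\sum_{f\geq 2}\frac{P(f)}{f} \leq 0.32\,d,
\]
using the upper bound $\P(x)\leq 0.32$ recorded there. Summing the contributions from (a) and (b) yields the estimate $\bigl(\M_\Q(\sqrt{|D_h|})+0.32\bigr)d$, which is comfortably within the claimed bound $\bigl(\M_\Q(\sqrt{|D_h|})+0.64\bigr)d$.

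I expect the main obstacle to be the clean invocation of Kummer--Dedekind combined with the discriminant--index identity: this is what converts the purely algebraic statement \emph{``the Kummer--Dedekind conclusion can fail at $p$''} into the quantitative assertion \emph{``every exceptional prime lies at or below $\sqrt{|D_h|}$,''} which is precisely why the lemma's hypothesis takes the form $x>\sqrt{|D_h|}$. Once that bridge is in place, the remaining estimates are a bookkeeping exercise using Lemma \ref{Lemma:Omega} and the uniform bound on $\P$.
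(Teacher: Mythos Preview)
Your argument is correct and follows the same skeleton as the paper's proof: Kummer--Dedekind plus the discriminant--index identity to localize the exceptional primes below $\sqrt{|D_h|}$, Lemma \ref{Lemma:Omega} for the trivial bound on $\omega_h(p)$, and the uniform bound $\P(x)\leq 0.32$ for the higher-degree contribution. The only structural difference is the order of splitting. The paper first splits $\M_\K(x)$ at the norm threshold $\sqrt{|D_h|}$ and then peels off the high-degree primes in the large range; this forces it to bound the small-norm piece $\bigstar$ by the maximum of two nonnegative sums, one of which already contains a $0.32d$ contribution from high-degree primes of small norm, and then adds another $0.32d$ from $\clubsuit$. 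Your degree-first decomposition via $\omega_h^*(p)$ groups \emph{all} high-degree primes into a single tail $T(x)\leq 0.32d$, so the small-prime defect in (a) involves only degree-one data and is bounded cleanly by $d\,\M_\Q(\sqrt{|D_h|})$. The net effect is that you recover the lemma with the sharper constant $0.32$ in place of $0.64$; the paper's looser constant is an artifact of its decomposition, not a necessity.
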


\begin{proof}
Suppose $h(x) \in \Z[x]$ is monic and irreducible with degree $d \geq 1$.
Let $\alpha \in \C$ be such that $h(\alpha) = 0$ (that is, $h$ is the minimal polynomial of $\alpha$ over $\Q$) 
and consider the number field
$\K = \Q(\alpha)$, which has degree $d$ \cite[p.~47]{StewartTall}. 
Let $\O_{\K}$ denote the ring of algebraic integers in $\K$; by construction it contains $\alpha$.
For each rational prime $p$, the ideal $p\O_{\K}$ factors uniquely (up to reordering the factors) as
\begin{equation*}
p\O_{\K} = \PP_1^{e_1} \PP_2^{e_2}\cdots \PP_r^{e_r},
\end{equation*}
in which $\PP_1,\PP_2,\ldots,\PP_r \subset \O_{\K}$ are distinct prime ideals \cite[Thm.~5.6]{StewartTall}.
Then $N(\PP_i) = \left| \O_{\K}/\PP_i \right| = p^{f_i}$, in which $f_i$ is the inertia degree of $p$ at $\PP_i$, and 
\begin{equation*}
e_1 f_1 + e_2 f_2 + \cdots + e_r f_r = d;
\end{equation*}
see \cite[Prop.~8.2, p.~46]{Neukirch}.
Since each prime ideal $\PP\subset\O_{\K}$ can occur in the factorization for only one rational prime $p$
and, moreover, $p \leq N(\PP) \leq p^d$ \cite[Thm.~5.14c]{StewartTall}, 
the number of prime ideals of norm $p^k$ is at most $d/k$.

The Dedekind factorization criterion relates 
the factorization of $p\O_{\K}$ to the factorization of $h$ over $\F_p$ \cite[Prop.~25, p.~27]{Lang}.
If $p \nmid [\O_{\K} : \Z[\alpha]]$, then 
\begin{equation*}
h(x) = h_1(x)^{e_1}  h_2(x)^{e_2} \cdots h_r(x)^{e_r} 
\end{equation*}
in $\F_p[x]$, where $h_1,h_2,\ldots,h_r \in \F_p[x]$ are distinct and irreducible
with $\deg h_i = f_i$.  
If $a \in \F_p$ and $h(a) = 0$, then there is a unique index $i$ such that $h_i(x) = x-a$.
Hence $f_i = \deg h_i = 1$ and $N(\PP_i) = p$.  Conversely,
$N(\PP_i) = p$ implies $\deg h_i =f_i = 1$ and hence $h_i(x) = x-a$ for some unique $a \in \F_p$.
We conclude that $\omega_h(p)$
equals the number of prime ideals of norm $p$ in the prime ideal factorization of $p\O_{\K}$.

If $p > \sqrt{|D_h|}$, then $p \nmid [\O_{\K} : \Z[\alpha]]$ since
\begin{equation*}
    \big[\O_{\K} : \Z[\alpha]\big]
    = \frac{ \det \Z[\alpha] }{ \det \O_{\K} }
    =\sqrt{ \frac{|D_h| }{ | \Delta_{\K} |}} \leq \sqrt{|D_h|}
\end{equation*}
by \cite[Sec.~1.2]{Neukirch}.
Consequently, $x \geq \sqrt{|D_h|}$ implies
\begin{align*}
   \sum_{p\leq x} \frac{\omega_h(p)}{p}
    &=\sum_{p\leq \sqrt{|D_h|}} \frac{\omega_h(p)}{p} + \sum_{\sqrt{|D_h|} < p\leq x} \frac{\omega_h(p)}{p} \\
    &=\sum_{p\leq \sqrt{|D_h|}} \frac{\omega_h(p)}{p} + \sum_{\substack{\sqrt{|D_h|} < p \leq x\\N(\PP)=p}} \frac{1}{N(\PP)}\\
    &=\M_{\K}(x) + \underbrace{\sum_{p\leq \sqrt{|D_h|}} \frac{\omega_h(p)}{p}  - \sum_{N(\PP) \leq \sqrt{|D_h|}} \frac{1}{N(\PP)}}_{\bigstar} - \underbrace{\sum_{\substack{\sqrt{|D_h|} < p^\ell \leq x\\N(\PP) = p^\ell\\ 2\leq \ell \leq d}} \frac{1}{N(\PP)}}_{\clubsuit}.
\end{align*}
First, note that
\begin{equation}\label{eq:Eh}
    |\bigstar | \leq \max \Bigg\{   \sum_{N(\PP) \leq \sqrt{|D_h|}} \frac{1}{N(\PP)}, \,\,
    \sum_{p\leq \sqrt{|D_h|}} \frac{\omega_h(p)}{p} \Bigg\}
\end{equation}
because the two sums in \eqref{eq:Eh} are both nonnegative.
Since $\P(y) \leq 0.32$ for $y \geq 2$, 
\begin{align*}
    \sum_{N(\PP) \leq \sqrt{|D_h|}} \frac{1}{N(\PP)} 
    \leq \sum_{p^\ell \leq \sqrt{|D_h|}} \frac{d/\ell}{p^\ell} 
    &= d\sum_{p \leq \sqrt{|D_h|}} \frac{1}{p} \,\,+\,\, d\!\!\!\! \sum_{\substack{p^\ell \leq \sqrt{|D_h|}\\2 \leq \ell \leq d}} \frac{1}{\ell\,p^\ell} \\
    &\leq d\,\M_{\Q}(\sqrt{|D_h|}) + d\, \P(\sqrt{|D_h|}) \\
    &\leq d \big(\, \M_{\Q}(\sqrt{|D_h|}) + 0.32\big).
\end{align*}
Lemma \ref{Lemma:Omega} ensures that $\omega_h(p) \leq d$ for all $p$. Thus,
\begin{equation*}
    \sum_{p\leq \sqrt{|D_h|}} \frac{\omega_h(p)}{p} 
    \leq d \sum_{p\leq \sqrt{|D_h|}}\frac{1}{p} 
    = d\, \M_{\Q}(\sqrt{|D_h| }) .
\end{equation*}
To bound \eqref{eq:Eh}, we take the greater of the two estimates above. 

Since the number of prime ideals of norm $p^\ell$ is at most $d/\ell$,
\begin{align*}
    |\clubsuit |
    =
    \sum_{\substack{\sqrt{|D_h|} < p^\ell \leq x\\N(\PP) = p^\ell\\ 2\leq \ell \leq d}} \frac{1}{N(\PP)}
    \leq d \sum_{\ell = 2}^{d} \frac{1}{\ell}\sum_{p\leq x} \frac{1}{p^\ell}
    = d\,\P(x)
    \leq 0.32\,d.
\end{align*}
Combining these observations and using the triangle inequality yields the result.
\end{proof}


We are now ready for the proof of Theorem \ref{Theorem:NFM}.

\begin{proof}[Proof of Theorem \ref{Theorem:NFM}]
Let $g \in \Z[x]$ be irreducible with degree $d \geq 1$ and leading coefficient $c$. Then 
\begin{equation}\label{eq:gh}
    c^{d - 1} g(x) = h(cx),
\end{equation}
in which $h \in \Z[x]$ is monic, irreducible, and has degree $d$.
Let $\alpha \in \C$ be such that $g(\alpha) = 0$.  Then $h(c\alpha) = 0$ and
$\K = \Q(\alpha) = \Q(c\alpha)$.

For $p \nmid c$, \eqref{eq:gh} gives $\omega_g(p) = \omega_h(p)$.
For $p \mid c$, Lemma \ref{Lemma:Omega} yields $\omega_g(p), \omega_h(p) \leq d$.  Thus,
\begin{align}
    \bigg|\sum_{p\leq x} \frac{\omega_g(p)}{p} - \sum_{p\leq x} \frac{\omega_h(p)}{p}\bigg|
    \leq \sum_{p\leq x} \frac{| \omega_g(p) - \omega_h(p)|}{p} 
    &\leq \sum_{p\mid c} \frac{\max\{ \omega_g(p) , \omega_h(p) \}}{p} \nonumber \\
    &\leq d\sum_{p\mid c} \frac{1}{p} \label{eq:Meryn}\\
    &\leq d\, \M_{\Q}(|c|) . \nonumber
\end{align}
Since the discriminant of $c^{d - 1} g(x)$ is $(c^{(d-1)})^{2d-2} D_g$ and the discriminant of $h(cx)$ is $c^{d(d-1)} D_h$, it follows from \eqref{eq:gh} that
\begin{equation}\label{eq:DiscriminantGH}
|D_h| = |c|^{(d-1)(d-2)} |D_g| = \D_g.
\end{equation}
For $x \geq \sqrt{|D_h|}$, Lemma \ref{Lemma:Components1} provides
\begin{align*}
    \left| \M_{\K}(x)  - \sum_{p\leq x} \frac{\omega_g(p)}{p}   \right|
    &= \left| \M_{\K}(x) - \sum_{p\leq x} \frac{\omega_h(p)}{p} \right|
    +\left| \sum_{p\leq x} \frac{\omega_h(p)}{p} - \sum_{p\leq x} \frac{\omega_g(p)}{p} \right| \\
    &< d\big( \M_{\Q}(|c|) + \M_{\Q}(\sqrt{|D_h|}) + 0.64 \big).
\end{align*}
To complete the proof, use \eqref{eq:DiscriminantGH} and rewrite the preceding in terms of $\D_g$. 
\end{proof}

\begin{remark}\label{Remark:Saving}
The term $\M_{\Q}(|c|)$ in \eqref{eq:Ah} can be improved.  Return to \eqref{eq:Meryn} and observe that
the maximal order of $\sum_{p|c}1/p$ is $\log \log \log |c|$ since if
$c=\prod_{p\leq y} p$, then $c=e^{(1+o(1))y}$ and hence $y=(1+o(1))\log c$.  
Consequently,
$\sum_{p|c} 1/p=\sum_{p\leq y} 1/p \sim \log \log y \sim \log\log\log c$.  This can even be made explicit with \cite[Cor.~2.1]{Broadbent}:
\begin{equation*}
\log c = \sum_{p \leq y} \log p = \theta(y) \leq (1 + 1.93378 \times 10^{-8})y.
\end{equation*}
On the other hand,
$\M_{\Q}(|c|) \sim \log \log |c|$ so the dependence upon $c$ is weak already and the improvement
is not worth pursuing.  
\end{remark}

\section{Proof of Theorem \ref{Theorem:Main}}\label{Section:Main}

The proof of Theorem \ref{Theorem:Main} requires two additional lemmas (Subsection \ref{Subsection:Lemmas}).
After that, we estimate three quantities which arise: $\AA$ in Section \ref{Subsection:A}, $\BB(x)$ in Section \ref{Subsection:B}, and $\CC$ in Section \ref{Subsection:C}. Finally, 
we wrap things up in Section \ref{Subsection:Wrap}.

\subsection{Preliminary lemmas}\label{Subsection:Lemmas}
The next two lemmas set up the final estimates needed for the proof of Theorem \ref{Theorem:Main}. Recall that $\omega_f(p)$ denotes the number of solutions to $f(x) \equiv 0 \pmod{p}$. The first lemma concerns the additive structure of $\omega_f(p)$.

\begin{lemma}\label{Lemma:OmegaSplit}
Suppose that $f =f_1f_2,\cdots f_k \in \Z[x]$ is a product of distinct, irreducible nonconstant polynomials $f_1,f_2,\ldots,f_k \in \Z[x]$.
If $p > |D_f|$, then
    \begin{equation*}
        \omega_f(p) = \omega_{f_1}(p) + \cdots + \omega_{f_k}(p).
    \end{equation*}
\end{lemma}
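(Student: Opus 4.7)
The plan is to show the inequality $\omega_f(p) \le \sum_{i=1}^k \omega_{f_i}(p)$ always holds, and then argue that the reverse inequality holds once $p > |D_f|$ by ruling out common roots between distinct factors.

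First I would observe that $a \in \F_p$ is a root of $f$ modulo $p$ if and only if $a$ is a root of at least one $f_i$ modulo $p$, so the solution set of $f(x) \equiv 0 \pmod p$ is the union of the solution sets of $f_i(x) \equiv 0 \pmod p$. By inclusion--exclusion (or just the union bound),
\begin{equation*}
\omega_f(p) \;\le\; \sum_{i=1}^k \omega_{f_i}(p),
\end{equation*}
with equality if and only if the solution sets are pairwise disjoint, i.e., no two distinct factors $f_i, f_j$ share a root modulo $p$.

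Next I would translate the presence of a shared root into a resultant condition. If $f_i$ and $f_j$ have a common root $a \in \F_p$, then $(x-a)$ is a common divisor in $\F_p[x]$, so the reduction of the Sylvester matrix modulo $p$ is singular, which forces $p \mid R(f_i, f_j)$. Since the $f_i$ are distinct irreducibles of positive degree, Lemma \ref{Lemma:ZeroResult} gives $R(f_i, f_j) \neq 0$ for $i \neq j$, and Lemma \ref{Lemma:ResultantDivide} gives $R(f_i, f_j) \mid D_f$. Hence $|R(f_i, f_j)| \le |D_f|$. If $p > |D_f|$, then $p$ cannot divide the nonzero integer $R(f_i,f_j)$, so no pair $(f_i,f_j)$ shares a root modulo $p$, and equality is achieved.

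I don't expect any real obstacle here — the whole argument is essentially a clean application of Lemma \ref{Lemma:ResultantDivide} together with the standard characterization of shared roots via the vanishing of the resultant. The only point worth double-checking is that $a \in \F_p$ being a root of $f_i \bmod p$ and of $f_j \bmod p$ really does imply $p \mid R(f_i,f_j)$, which follows because the resultant commutes with reduction modulo $p$ as long as the leading coefficients do not drop in degree simultaneously; since $R(f_i,f_j)$ is a polynomial expression in the coefficients, the reduction is automatic and its vanishing over $\F_p$ is equivalent to a common factor in $\F_p[x]$.
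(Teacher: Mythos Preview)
Your proof is correct and follows essentially the same route as the paper: establish the trivial inequality $\omega_f(p)\le\sum_i\omega_{f_i}(p)$, then show that a shared root modulo $p$ forces $p\mid R(f_i,f_j)$, invoke Lemma~\ref{Lemma:ResultantDivide} to get $R(f_i,f_j)\mid D_f$, and conclude. The only cosmetic difference is that the paper deduces $p\mid R(f_i,f_j)$ from the B\'ezout-type identity $u_{ij}f_i+v_{ij}f_j=R(f_i,f_j)$ in $\Z[x]$ (citing \cite[Prop.~5, Sec.~3.6]{Cox}), whereas you argue via singularity of the reduced Sylvester matrix; both are standard and equivalent, though the B\'ezout identity sidesteps entirely the degree-drop worry you raise in your last paragraph.
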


\begin{proof}
Each zero of $f$ in $\Z/p\Z$ is a zero of some $f_i$.  Thus,
\begin{equation}\label{eq:ww}
\omega_f(p) \leq \omega_{f_1}(p) + \cdots + \omega_{f_k}(p)
\end{equation}
holds for any $p$.
Since every zero of $f_i$ in $\Z/p\Z$ is a zero of $f$, it suffices to show that
$f_i$ and $f_j$ have no common zeros in $\Z/p\Z$ if $i \neq j$ and $p > |D_f|$.
If $i \neq j$, then there are $u_{ij}, v_{ij} \in \Z[x]$ such that 
\begin{equation}\label{eq:Bezout}
u_{ij} f_i + v_{ij} f_j = R(f_i,f_j) ,
\end{equation}
in which $R(f_i,f_j) \in \Z$ is the resultant of $f_i$ and $f_j$ \cite[Prop.~5, Sec.~3.6]{Cox}.
Since $f_i,f_j \in \Z[x]$ are distinct and irreducible, they share no common factors over $\C$,
so $R(f_i,f_j) \neq 0$ \cite[Cor.~4, Sec.~3.6]{Cox}.
If $f_i,f_j$ have a common zero in $\Z/p\Z$, then \eqref{eq:Bezout} ensures that
$p \mid R(f_i,f_j)$.  Consequently, $p \mid D_f$ (Lemma \ref{Lemma:ResultantDivide}), 
and hence $p \leq |D_f|$ since $D_f \neq 0$ (Lemma \ref{Lemma:IrreducibleDiscriminant}). 
Thus, $p > |D_f|$ implies \eqref{eq:ww} is an equality.
\end{proof}

The second lemma is an asymptotic estimate for a special sum over $\omega_f(p)$.

\begin{lemma}\label{lem:ADFA_1}
Suppose that $f =f_1f_2,\cdots f_k \in \Z[x]$ is a product of distinct, irreducible, nonconstant polynomials $f_1,f_2,\ldots,f_k \in \Z[x]$
of degrees $d_1,d_2,\ldots,d_k \geq 1$, respectively.
For $x \geq \sqrt{ \D_f}$,
\begin{equation}\label{eq:ABC}
\bigg|\sum_{p \leq x} \frac{\omega_f(p)}{p}  - k \log \log x \bigg|
\leq d\, \M_{\Q}( | D_f| ) + \AA + \BB(x) + \CC,
\end{equation}
in which 
\begin{equation}\label{eq:ddkdeg}
d = d_1+d_2+\cdots+d_k,
\end{equation}
and
\begin{equation*}
\AA = \sum_{i=1}^k |A_{f_i}| ,\qquad
\BB(x) = \sum_{i=1}^k |B_{\K_i}(x)| ,\qquad \text{and}\qquad
\CC = \sum_{i=1}^k |C_{\K_i}|.
\end{equation*}
Here $A_{f_i}$, $B_{\K_i}(x)$, and $C_{\K_i}$ are given by 
\eqref{eq:Ah}, \eqref{eq:B}, and \eqref{eq:MLLMB}, respectively.
\end{lemma}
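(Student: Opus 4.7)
The plan is to reduce the sum $\sum_{p \leq x} \omega_f(p)/p$ to a sum of $k$ analogous sums, one for each irreducible factor $f_i$, and then apply Theorem \ref{Theorem:NFM} together with the Mertens-type expansion \eqref{eq:MLLMB} to each. The $\log \log x$ main term then appears once per $\K_i = \Q(\alpha_i)$, producing the aggregate $k \log \log x$. Each error term in \eqref{eq:ABC} has a direct interpretation: $d\,\M_{\Q}(|D_f|)$ pays for the reduction, while $\AA$, $\BB(x)$, and $\CC$ collect the triangle-inequality totals coming from Theorem \ref{Theorem:NFM} and \eqref{eq:MLLMB}.

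The first step is to compare $\omega_f(p)$ with $\omega_{f_1}(p) + \cdots + \omega_{f_k}(p)$. For $p > |D_f|$, Lemma \ref{Lemma:OmegaSplit} yields equality. For $p \leq |D_f|$, any zero of $f$ modulo $p$ is a zero of some $f_i$, so $\omega_f(p) \leq \sum_i \omega_{f_i}(p)$, and combined with Lemma \ref{Lemma:Omega} applied to each irreducible $f_i$ this gives
\begin{equation*}
\Big| \omega_f(p) - \sum_{i=1}^k \omega_{f_i}(p) \Big| \,\leq\, \sum_{i=1}^k d_i \,=\, d.
\end{equation*}
Dividing by $p$ and summing over $p \leq |D_f|$ bounds the aggregate discrepancy by $d\,\M_{\Q}(|D_f|)$, contributing the first term on the right-hand side of \eqref{eq:ABC}.

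The second step is to apply Theorem \ref{Theorem:NFM} and the expansion \eqref{eq:MLLMB} to each $f_i$. Chaining the two identities produces
\begin{equation*}
\sum_{p \leq x} \frac{\omega_{f_i}(p)}{p} \,=\, \log \log x + C_{\K_i} + B_{\K_i}(x) + A_{f_i}.
\end{equation*}
Summing over $i$, applying the triangle inequality, and combining with the Step 1 bound yields \eqref{eq:ABC} with the stated $\AA$, $\BB(x)$, and $\CC$.

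The main obstacle is checking that the single hypothesis $x \geq \sqrt{\D_f}$ suffices to invoke Theorem \ref{Theorem:NFM} on every $f_i$, which needs $x > \max\{2, \sqrt{\D_{f_i}}\}$. This reduces to the comparison $\D_{f_i} \leq \D_f$. Writing $c_i$ for the leading coefficient of $f_i$, one has $c = c_1 c_2 \cdots c_k$ and hence $|c_i| \leq |c|$. Iterating the product formula $D_{gh} = D_g D_h R(g,h)^2$ from \eqref{eq:DiscriminantFormulas} gives $|D_f| = \prod_i |D_{f_i}| \cdot \prod_{i<j} R(f_i, f_j)^2$, and since each omitted factor is a nonzero integer, $|D_{f_i}| \leq |D_f|$. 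Because $(d_i-1)(d_i-2) \leq (d-1)(d-2)$ whenever $1 \leq d_i \leq d$, we conclude that $\D_{f_i} \leq \D_f$, so the hypothesis is indeed strong enough to run Theorem \ref{Theorem:NFM} on each $f_i$.
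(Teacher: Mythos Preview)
Your proposal is correct and follows essentially the same route as the paper: split the sum at $|D_f|$ using Lemma~\ref{Lemma:OmegaSplit} and the trivial bound $\omega_f(p)\leq\sum_i\omega_{f_i}(p)\leq d$ for small primes, then apply Theorem~\ref{Theorem:NFM} and \eqref{eq:MLLMB} to each factor and collect terms by the triangle inequality. Your justification of $\D_{f_i}\leq\D_f$ via $|c_i|\leq|c|$, $|D_{f_i}|\mid|D_f|$, and $(d_i-1)(d_i-2)\leq(d-1)(d-2)$ is in fact more detailed than the paper's one-line assertion of the same inequality.
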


\begin{proof}
For each $i=1,2,\ldots,k$, let $\alpha_i$ be a root of $f_i$ and define $\K_i =\Q(\alpha_i)$,
which has degree $d_i$ since each $f_i$ is irreducible.  
Observe that $D_f \neq 0$ (since $f$ has no repeated roots in $\C$),
\begin{equation*}
    k \leq d
    \qquad\text{and}\qquad
    1\leq |D_{f_1}|, |D_{f_2}|,\ldots, |D_{f_k}| \leq |D_f|.
\end{equation*}
Without loss of generality, let $c\geq 1$ denote the leading coefficient of $f$ and let 
$c_1,c_2,\ldots,c_k \geq 1$ denote the leading coefficients of $f_1,f_2,\ldots,f_k$, respectively.
Then $c = c_1 c_2 \cdots c_k$ and $c_1,c_2,\ldots,c_k \leq c$.

If $x > |D_f|$, then Lemma \ref{Lemma:OmegaSplit} implies
\begin{align*}
    \sum_{p \leq x} \frac{\omega_f(p)}{p} 
    &= \sum_{p \leq |D_f|}  \frac{\omega_f(p)}{p} +  \sum_{|D_f| < p \leq x}  \frac{ \omega_f(p)}{p} \\
    &= \sum_{p \leq |D_f|}  \frac{\omega_f(p)}{p} +  \sum_{|D_f| < p \leq x}\sum_{i=1}^k   \frac{ \omega_{f_i}(p)}{p} \\
    &= \sum_{p \leq |D_f|}  \frac{\omega_f(p)}{p}  + \sum_{i=1}^k \left(\sum_{p \leq x}  \frac{ \omega_{f_i}(p)}{p}  - \sum_{p \leq |D_f|} \frac{ \omega_{f_i}(p)}{p}\right)\\
    &= \underbrace{\sum_{p \leq |D_f|}  \frac{\omega_f(p)- \sum_{j=1}^{k}\omega_{f_j}(p)}{p} }_{I} 
    + \underbrace{\sum_{i=1}^k \sum_{p \leq x}  \frac{ \omega_{f_i}(p)}{p} }_{II}.
\end{align*}

\noindent\textbf{Term I.} Observe that
\begin{align*}\qquad
    |I|
    &= \left|\sum_{p \leq |D_f|}  \frac{\omega_f(p)-\big( \omega_{f_1}(p)+\omega_{f_2}(p)
        +\cdots + \omega_{f_k}(p)\big)}{p} \right|  \\
    &\leq \sum_{p \leq |D_f|}  \frac{ \omega_{f_1}(p)+\omega_{f_2}(p)+\cdots + \omega_{f_k}(p)}{p} && \text{by \eqref{eq:ww}}   \\
    &\leq \sum_{p \leq |D_f|}  \frac{d_1+d_2+\cdots + d_k }{p}  && \text{by Lemma \ref{Lemma:Omega}}  \\
    &\leq \sum_{p \leq |D_f|} \frac{d}{p}  && \text{by \eqref{eq:ddkdeg}}\\
    &\leq d\,\M_{\Q}(|D_f|).
\end{align*}

\noindent\textbf{Term II.} 
Since $|c_i| \leq |c|$ and $d_i \leq d$, it follows that
\begin{equation}\label{eq:DDF}
\D_{f_i} \leq \D_f
\end{equation}
for $i=1,2,\ldots,k$.  
For $x \geq  \sqrt{\D_f} \geq  \max_{1\leq i \leq k} \sqrt{\D_{f_i}}$,
Theorem \ref{Theorem:NFM} ensures that
\begin{align*}
II
&=  \sum_{i=1}^k \sum_{p \leq x}  \frac{ \omega_{f_i}(p)}{p} \\
&= \sum_{i=1}^k ( \M_{\K_i}(x) + A_{f_i} ) && \text{by Theorem \ref{Theorem:NFM}}\\
&= k \log \log x + \sum_{i=1}^k  (A_{f_i} +B_{\K_i}(x) + C_{\K_i}  ) && \text{by \eqref{eq:MLLMB}}\\
&= k \log \log x + \AA + \BB(x) + \CC.
\end{align*}

Putting this all together, we obtain \eqref{eq:ABC}.
\end{proof}

To complete the proof of Theorem \ref{Theorem:Main}, we must estimate $\AA$, $\BB(x)$, and $\CC$.


\subsection{Estimating $\AA$}\label{Subsection:A}

Since $\M_{\Q}$ and $\P$ are increasing, 
Theorem \ref{Theorem:NFM} and \eqref{eq:ddkdeg} provide
\begin{align*}
\AA
= \sum_{i=1}^k |A_{f_i}|
&\leq \sum_{i=1}^k d_i\Big(\M_{\Q}(|c_i|) + \M_{\Q}\big(\sqrt{\D_{f_i}}\big) + 0.64\Big)\\
&\leq \bigg(\sum_{i=1}^k d_i\bigg)\Big(\M_{\Q}(|c|) + \M_{\Q}\big(\sqrt{\D_{f}}\big) + 0.64\Big) \\
&\leq d\left( \M_{\Q}(|c|) + \M_{\Q}( \sqrt{\D_f}) + 0.64 \right).
\end{align*}

\subsection{Estimating $\BB(x)$}\label{Subsection:B}
The degree $d_i$ of each irreducible factor $f_i$ of $f$ equals the degree $n_{\K_i}$ of 
the number field $\K_i$ generated by a root of $f_i$; that is, $n_{\K_i} = d_i$ for $i=1,2,\ldots,k$.  
Suppose for now that each $d_i \geq 2$.  Then \eqref{eq:DiscriminantInequality} and \eqref{eq:Lambda} imply
\begin{align*}
\Lambda_{\K_i} 
&= e^{28.2 n_{\K_i}+5}(n_{\K_i}+1)^{\frac{5n_{\K_i}+5}{2}} | \Delta_{\K_i}|^{\frac{1}{n_{\K_i}+1}}(\log |\Delta_{\K_i} | )^{n_{\K_i}} \\
&\leq  e^{28.2 d_i+5}(d_i+1)^{\frac{5d_i+5}{2}}|D_{f_i}|^{\frac{1}{d_i+1}}(\log |D_{f_i}|)^{d_i} \\
&\leq  e^{28.2 d+5}(d+1)^{\frac{5d+5}{2}}|D_f|(\log |D_f|)^d
\end{align*}
since $D_{f_i} |  D_f$ and $D_f \neq 0$ (see Lemma \ref{Lemma:IrreducibleDiscriminant} and the discussion below
\eqref{eq:DiscriminantFormulas}).  Define
\begin{equation*}
\Lambda = e^{28.2 d+5}(d+1)^{\frac{5d+5}{2}}|D_f|(\log  |D_f| )^d,
\end{equation*}
which satisfies $0 \leq \Lambda_{\K_i} \leq \Lambda$ for $i=1,2,\ldots,k$ and vanishes if $D_f=1$.

From \eqref{eq:KappaLower}, \eqref{eq:B}, and \eqref{eq:Upsilon}, 
\begin{align}
|B_{\K_i}|
&\leq \frac{2}{\log x}
 \left[ \left(  \frac{(n_{\K_i}+1)^2}{2\kappa_{\K}(n_{\K_i}-1)}\Lambda_{\K_i}   + 1\right)
+  \frac{0.55\, \Lambda_{\K_i}n_{\K_i}(n_{\K_i}+1)}{\kappa_{\K_i}} + n_{\K_i} + 
 40.31 \dfrac{\Lambda_{\K_i}n_{\K_i}}{\kappa_{\K_i}}  \right]  \nonumber \\
&\leq \frac{2 }{\log x}
 \left[ \frac{\Lambda}{\kappa_{\K_i}} \left(   \frac{(n_{\K_i}+1)^2}{2(n_{\K_i}-1)}  
+  0.55\, n_{\K_i}(n_{\K_i}+1) + 
 40.31 n_{\K_i}  \right) + 1 + n_{\K_i} \right] \nonumber \\
&= \frac{2 }{\log x}
 \left[ \frac{\Lambda \sqrt{\D_f}}{0.36232}\left(   \frac{(d_i+1)^2}{2(d_i-1)}  
+  0.55\, d_i(d_i+1) + 
 40.31 d_i  \right) + 1 + d_i \right] \nonumber \\
&\leq \frac{2 }{\log x}
 \left[ \frac{\Lambda \sqrt{\D_f}}{0.36232}\big( 0.5(d_i+7)
+  0.55\, d_i(d +1) + 
 40.31 d_i  \big) + 1 + d_i \right] \label{eq:Jaqen}
\end{align}
for $d_i \geq 2$ and $x \geq 2$.  In the final inequality, we used the fact that
\begin{equation*}
\frac{(x+1)^2}{2(x-1)} \leq \frac{x+7}{2}
\qquad \text{for $x \geq 2$}.
\end{equation*}
This inequality is equivalent to $2-2(x-1)^{-1} \geq 0$, which holds for $x \geq 2$. 
We use this argument to ensure that the choice $d_i=1$ is permissible below.

If $d_i = 1$, then $\K_i = \Q$ and $D_{f_i} = 1$ (the discriminant of a nonconstant linear polynomial is $1$).
Recall from \eqref{eq:ClassicalMertens} that
$M_{\Q}(x) = \log \log x + C_{\Q} + B_{\Q}(x)$,
in which $C_{\Q} = 0.2614972\ldots$ is the Meissel--Mertens constant
and $|B_{\Q}(x)| \leq 2(\log x)^{-2}$ for $x>1$ \cite[(3.17), (3.20)]{Rosser}.  Since
\begin{equation*}
\frac{2}{(\log x)^2} \leq \frac{4 }{\log x} , \qquad
\text{for $x \geq 2$},
\end{equation*}
it follows that \eqref{eq:Jaqen} holds if $d_i = 1$ (even if $|D_f |=1$, so $\Lambda = 0$).  Thus, 
\eqref{eq:ddkdeg} yields
\begin{align*}
\BB(x)
&=  \sum_{i=1}^k |B_{\K_i}(x)|\\
&\leq \sum_{i=1}^k \frac{2 }{\log x}
 \left[ \frac{\Lambda \sqrt{\D_f}}{0.36232}\big( 0.5(d_i+7)+  0.55\, d_i(d +1) +  40.31 d_i  \big) + 1 + d_i \right] \\
&\leq\frac{2 }{\log x} \left(\sum_{i=1}^k 
 \left[ \frac{\Lambda \sqrt{\D_f}}{0.36232}\big( 0.5d_i+3.5+  0.55\, d_i(d +1) +  40.31 d_i  \big) \right] + k + d \right)\\
&\leq\frac{2 }{\log x} \left( 
 \left[ \frac{\Lambda \sqrt{\D_f}}{0.36232}\big( 0.5d+3.5k+  0.55\, d(d +1) +  40.31 d  \big) \right] + 2 d \right)\\
&=\frac{2 }{\log x} \left( 
 \left[ \frac{\Lambda \sqrt{\D_f}}{0.36232}\big(  0.55 d^2  + 44.86 d \big) \right] + 2 d \right).
\end{align*}

\subsection{Estimating $\CC$}\label{Subsection:C}
Recall from \eqref{eq:CInequality} that
\begin{equation*}
    \gamma -d_i +\log \kappa_{\K_i}  \leq C_{\K_i} \leq \gamma +\log \kappa_{\K_i}
\end{equation*}
for $i=1,2,\ldots,k$. Therefore,
\begin{equation*}
|C_{\K_i}| \leq \gamma + d_i + | \log \kappa_{\K_i} |.
\end{equation*}
For $d_i \geq 2$,
the residue bounds \eqref{eq:KappaUpper}, \eqref{eq:KappaLower}, and the inequality \eqref{eq:DDF}, provide
\begin{equation*}
     \frac{0.36232}{\sqrt{\D_f}}
     \,\leq\, \frac{0.36232}{\sqrt{\D_{f_i}}} \
     \,\leq\, \kappa_{\K_i} 
     \,\leq\, \left(\frac{e \log \D_{f_i}}{2(d_i-1)}\right)^{d_i-1} 
     \,\leq\, \left(\frac{e \log \D_{f}}{2}\right)^{d-1}.
\end{equation*}
If $d_i = 1$, then $\K_i = \Q$, $\kappa_{\K_i} = 1$, $D_{f_i}=1$, and $\D_f \geq \D_{f_i} \geq 1$. Thus, 
\begin{equation}\label{eq:KappaFinal}
    \qquad
    \frac{0.36232}{\sqrt{\D_f}} \, \leq \, \kappa_{\K_i}\,
    \leq\,
    \max\left\{1,\frac{e \log \D_{f}}{2}\right\}^{d-1}\qquad\text{for $d_i \geq 1$},
\end{equation}
where the maximum in \eqref{eq:KappaFinal} is  $1$ for $\D_f =1,2$ only. 
Using \eqref{eq:KappaFinal}, for $i=1,2,\ldots,k$ and $d_i\geq 1$,  we have
\begin{align}
    &| \log \kappa_{\K_i}| 
    \leq \max\! \left\{ \log \frac{\sqrt{\D_f}}{0.36232},\, (d-1)\log\max\left\{1,\frac{e \log \D_{f}}{2}\right\} \right\} \nonumber \\
    &\quad\leq \begin{cases} 
    \max\! \left\{ 1.01523 + \frac{1}{2}\log \D_f,\, 0 \right\} &\text{if $\D_f =1,2$,}\\
    \max\! \left\{ 1.01523 + \frac{1}{2}\log \D_f,\, (d-1)\left(\log{\frac{e}{2}} + \log\log \D_{f}\right) \right\} &\text{if $\D_f \geq 3$,}
    \end{cases}\nonumber\\
    &\quad\leq (d-1)
    \begin{cases} 
    1.01523 + \frac{1}{2}\log \D_f &\text{if $\D_f =1,2$,}\\
    \max\! \big\{ 1.01523 + \frac{1}{2}\log \D_f,\, (0.30686 + \frac{1}{2}\log \D_f) \big\} &\text{if $\D_f \geq 3$,}
    \end{cases}\nonumber\\
    &\quad\leq (d-1)\left(1.02 + \frac{1}{2} \log\D_f\right), \nonumber 
\end{align}
since $\log{\D_f} < \sqrt{\D_f}$ because $\D_f \geq 1$.

Since $k \leq d$, and because of \eqref{eq:ddkdeg}, we conclude
\begin{align*}
    \CC
    = \sum_{i=1}^k |C_{\K_i}| \leq \sum_{i=1}^k (\gamma + d_i + | \log \kappa_{\K_i} | ) 
    &\leq d \gamma + d +  (d-1) \sum_{i=1}^k\left(1.02 + \frac{1}{2}\log \D_f\right) \\
    &\leq d \left(\gamma + 1 +  (d-1) 1.02 + \frac{d-1}{2}\log \D_f\right) \\
    &=d \left(\gamma + 1.02\,d - 0.02 + \frac{d-1}{2}\log \D_f\right).
\end{align*}

\subsection{Wrapping things up}\label{Subsection:Wrap}
Now return to \eqref{eq:ABC} and use the bounds on $\AA$, $\BB(x)$, and $\CC$ obtained in 
Subsections \ref{Subsection:A}, \ref{Subsection:B}, and \ref{Subsection:C},  respectively.
This yields the desired bound \eqref{eq:MainUnconditional} of Theorem \ref{Theorem:Main}
and completes the proof. \qed

\section{Conclusion}\label{Section:Conclusion}

A natural number can be proven composite without factoring it.
In a similar manner, Theorem \ref{Theorem:Main} permits us to certify the number of distinct
irreducible factors of $f \in \Z[x]$:
compute how large $x$ must be so that the error in \eqref{eq:MainUnconditional}
is less than $\frac{1}{2}$.

We have not made a concerted effort to obtain the best possible error bounds.
Our main point is that recent advances on explicit Mertens' theorems for number fields \cite{EMT4NF2} finally permit completely explicit results such as our Theorems \ref{Theorem:NFM} and \ref{Theorem:Main}.
Some improvements are required to make this approach computationally feasible.

\subsection{The Generalized Riemann Hypothesis}
The Generalized Riemann Hypothesis will lead to much stronger error terms in the number-field analogues of Mertens' theorems. 
In particular, Theorem \ref{Theorem:Main} might be brought into the realm of computational feasibility, especially if done in conjunction with the following idea.

\subsection{Alternate for Theorem \ref{Theorem:Main}}
One might use a more rapidly divergent series in place of
$\sum_{N(\PP) \leq x} N(\PP)^{-1} \sim \log \log x$. For $x\geq 2$,
the first two authors  \cite{EMT4NF2} proved
\begin{equation}\label{eq:Mertens1}
\Bigg|\sum_{N(\PP)\leq x} \frac{\log N(\PP)}{N(\PP)} - \log x \Bigg| \leq \Upsilon_{\K},
\end{equation}
with $\Upsilon_{\K}$ as in \eqref{eq:Upsilon}.
One hopes for an effective version of 
Theorem \ref{Theorem:Main} of the form
\begin{equation}\label{eq:Main_adapted}
    k = \frac{1}{\log{x}}\sum_{p\leq x}\frac{\omega_f(p)\log{p}}{p} + O\left(\frac{1}{\log{x}}\right).
\end{equation}
This possibility is suggested by an 
inexplicit result of Nagell \cite{Nagell}, whose error term was improved by Bantle \cite{Bantle}:
\begin{equation}\label{eq:Nagell}
    \sum_{p\leq x}\frac{\omega_f(p)\log{p}}{p} = \log{x} + O(1),
\end{equation}
Diamond--Halberstam provide an alternate proof \cite[Prop.~10.1]{DiamondHalberstam}, and they use this result in their sieve methods. Moreover, Halberstam--Richert use a generalized version
(see equation $(\Omega(\kappa,L))$ on \cite[p.~142]{HalberstamRichert}) of \eqref{eq:Nagell}  throughout their treatise on sieve methods \cite{HalberstamRichert}. Therefore, results like 
Theorem \ref{Theorem:NFM} are of independent interest, for example, in the development of explicit sieve methods.

\bibliographystyle{amsplain} 
\bibliography{NDIFP}

\end{document}